\documentclass[12pt]{article}
\usepackage{amsmath}
\usepackage{amssymb}
\usepackage{amsthm}
\usepackage{mathabx}
\usepackage{caption}
\usepackage[usenames]{color}
\usepackage{amscd}
\usepackage{dsfont}
\usepackage{indentfirst}

\usepackage[colorlinks=true,linkcolor=blue,filecolor=red,
citecolor=webgreen]{hyperref}
\definecolor{webgreen}{rgb}{0,.5,0}

\def\C{{\mathds{C}}}
\def\Q{{\mathds{Q}}}
\def\R{{\mathbb{R}}}
\def\N{{\mathds{N}}}
\def\Z{{\mathds{Z}}}
\def\1{{\bf 1}}

\def\pont{$\bullet$ }

\newtheorem{theorem}{Theorem}

\newtheorem{lemma}[theorem]{Lemma}
\newtheorem{corollary}[theorem]{Corollary}

\begin{document}
	
	\title{{\bf Counting $k$-tuples of positive integers
			such that the values of several polynomials of $k$ variables are relatively
			prime}}
	\author{L\'aszl\'o T\'oth}
	\date{}
	\maketitle
	
	\begin{abstract} Let $F=(f_1(x_1,\ldots,x_k),\ldots, f_m(x_1,\ldots,x_k))$ be a system of nonconstant polynomials 
		of $k$ variables with integer coefficients and let
		\[ {\gcd}_F(x_1,\ldots,x_k)= \gcd(f_1(x_1,\ldots,x_k),\ldots, f_m(x_1,\ldots,x_k)).
		\]
		We obtain an unconditional asymptotic formula  for the sum
		\[
		\sum_{1\le x_1,\ldots,x_k\le x} h({\gcd}_F(x_1,\ldots,x_k)),
		\] 
		where $F$ is a system of $m\ge 2$ polynomials of $k\ge 2$ variables subject to certain general properties, and $h$ is a bounded strongly multiplicative function. In particular, we deduce an asymptotic formula with error term concerning the density of $k$-tuples of positive integers $(x_1,\ldots,x_k)$ such that ${\gcd}_F(x_1,\ldots,x_k)=1$, given by the Ekedahl-Poonen formula. 
	\end{abstract}
	
	{\sl 2020 Mathematics Subject Classification}: Primary 11N37, Secondary 11C08, 11A05
	
	{\sl Key Words and Phrases}: integer polynomials of several variables, systems of congruences, greatest common divisor, 
	asymptotic density, asymptotic formula
	
	\section{Introduction}
	
	Let $F=(f_1(x_1,\ldots,x_k),\ldots, f_m(x_1,\ldots,x_k))$ be a system of nonconstant polynomials 
	of $k$ variables with integer coefficients, where $k,m\in \N:=\{1,2,\ldots\}$. 
	Let $N_F(n)$ denote the number of solutions of the system of congruences $f_i(x_1,\ldots,x_k)\equiv 0$ (mod $n$) ($1\le i\le m$), that is, the number of solutions $(x_1,\ldots,x_k)\in \{1,\ldots,n\}^k$. The function $n\mapsto N_F(n)$ is multiplicative. 
	
	Also, for $x_1,\ldots,x_k\in \N$ let
	\begin{equation*}
		{\gcd}_F(x_1,\ldots,x_k)= \gcd(f_1(x_1,\ldots,x_k),\ldots, f_m(x_1,\ldots,x_k)).
	\end{equation*} 
	
	It is known that if $k\ge 1$, $m\ge 2$ and the polynomials $f_i$ ($1\le i\le m$) are relatively prime (over the field $\Q$), 
	then the asymptotic density of the $k$-tuples of positive integers $(x_1,\ldots,x_k)$ such that ${\gcd}_F(x_1,\ldots,x_k)=1$ is 
	\begin{equation} \label{C_F}
		C_F= \prod_p \left(1- \frac{N_F(p)}{p^k} \right),    
	\end{equation}
	the product being over the primes $p$. This is the Ekedahl-Poonen formula, initially given in the case $m=2$ and then extended for $m\ge 2$. See Poonen \cite[Th.\ 3.1]{Poo2003}, Bodin and D\`{e}bes \cite[Th.\ 1.5]{BodDeb2023}.
	
	In the case of arbitrary nonconstant polynomials of one variable with integer coefficients, where $f_i(x_i)$ only depends on $x_i$ ($k=m\ge 2$, $1\le i\le k$)
	one has the asymptotic formula
	\begin{equation} \label{form_1_var}
		\sum_{\substack{1\le x_1,\ldots,x_k\le x\\ \gcd(f_1(x_1),\ldots, f_k(x_k))=1}} 1 = 
		x^k \prod_p \left(1-\frac{\varrho(p)}{p^k}\right) + O(x^{k-1} (\log x)^c),
	\end{equation}
	with some positive constant $c$, where $\varrho(p)=\prod_{1\le i\le k} \varrho_i(p)$ and 
	$\varrho_i(p)$ denotes the number of solutions of the congruence $f_i(x)\equiv 0$ (mod $p$).
	A similar formula holds with the slightly more general condition $\gcd(f_1(x_1),\ldots, f_k(x_k))=\ell$, where $\ell \in \N$ is fixed. See Chidambaraswamy and Sitaramachandrarao \cite{ChiSit1985, ChiSit1987}. 
	
	The goal of this paper is to generalize formula \eqref{form_1_var} for polynomials of several variables that satisfy certain conditions.
	More generally, we consider the sum
	\begin{equation}  \label{S_F_h}
		S_{F,h}(x)= \sum_{1\le x_1,\ldots,x_k\le x} h({\gcd}_F(x_1,\ldots,x_k)),
	\end{equation}
	where $h:\N \to \R$ is an arithmetic function. 
	As far as we know, there are no such asymptotic formulas with error terms in the literature. 
	
	It is a difficult task to deduce asymptotic formulas for sums given by \eqref{S_F_h}. For example, if $h=\mu^2$, then  
	a result is only known in the case of one polynomial of one variable ($k=m=1$)
	Namely, Murty and Pasten \cite[Th.\ 1.1]{MP2014} proved the following result assuming the ABC conjecture. Let $f$ be a polynomial with integer
	coefficients, of degree $r\ge  2$, without repeated factors, and with $\gcd (f(n):n\in \N)$ squarefree. Then
	\begin{equation*}
		\sum_{n\le x} \mu^2(f(n)) =c_f x+ O\left(\frac{x}{(\log x)^{\gamma}}\right),
	\end{equation*}
	where $\gamma>0$ is a computable constant that only depends on $r$ (not on the particular
	$f$), and
	\begin{equation*}
		c_f=\prod_p \left(1-\frac{\varrho_f(p^2)}{p^2} \right),
	\end{equation*}
	where $\varrho_f(p^2)$ is the number of solutions of the congruence $f(x)\equiv 0$ 
	(mod $p^2$). Also see Granville \cite{Gra1998}, Poonen \cite{Poo2003}, and the recent preprint by 
	Sofos \cite{Sof2026}.
	
	However, we are able to deduce an unconditional asymptotic formula for \eqref{S_F_h} if $F$ is a system of $m\ge 2$ polynomials of $k\ge 2$ variables, subject to certain general properties, and $h$ is a bounded strongly multiplicative function. The essential property of strongly multiplicative functions $h$, which we use in the proof, is that $(\mu*h)(p^a)=0$ for all prime powers $p^a$ with $a\ge 2$. 
	Therefore, it is enough to apply known upper bounds for $N_F(p)$, where $p$ is a prime and $\Z/p\Z$ is a field. 
	In particular, in the case $h(1)=1$ and $h(n)=0$ for $n>1$, we deduce a direct generalization of formula \eqref{form_1_var} 
	for polynomials of several variables. See Corollary \ref{Corollary_dens}.
	
	In the case of some other functions $h$, we would also need good upper bounds for $N_F(p^a)$, with certain values $a\ge 2$, which are not known in the literature. Here, the main difficulty is that for $a\ge 2$, $\Z/p^a\Z$ is not an integral domain, and even in the case of one variable, the number $N_F(p^a)$ of solutions can be large. For $h=\mu^2$ one has $(\mu*\mu^2)(p^2)=-1$, $(\mu*\mu^2)(p^a)=0$ for $a\in \{1,3,4,\ldots\}$, and an upper bound for $N_F(p^2)$ would be needed. 
	
	Our main general results are formulated in Section \ref{Section_Main_results}.
	Special cases are discussed in Section \ref{Section_Special_cases}. Certain lemmas needed for the proofs are given in Section \ref{Section_Preliminaries}. The proofs of the main results, using only elementary arguments, are presented in Section \ref{Section_Proofs}. 
	
	Throughout the paper, we use the following notation: $\mu$ is the M\"obius function, $\omega(n)$ and $\kappa(n)$ denote 
	the number and the product of distinct prime divisors of $n\in \N$, respectively; $\delta(n)=\lfloor 1/n\rfloor$ ($n\in \N$), $*$ denotes the convolution of arithmetic functions, $\sum_p$ and $\prod_p$ represent sums and products over the primes, respectively.
	
	\section{General results} \label{Section_Main_results}
	
	We say that a system $F$ of nonconstant polynomials $f_i(x_1,\ldots,x_k)$ of total degree $d_i$,
	has the positivity property P, if all coefficients of the monomials of degree $d_i$ are positive \textup{($1\le i\le m$)}.
	
	Our first result is the following. 
	
	\begin{theorem} \label{Th_1}
		Let $k,m\ge 2$ and let $F$ be a system of polynomials with property P. 
		Let $h:\N\to \C$ be a bounded strongly multiplicative function.
		
		Assume that for all sufficiently large primes $p$, $N_F(p)\le C p^{k-2}$ and 
		$|h(p)|\le D$, where $C>0$, $D\ge 0$ are constants.
		
		Then
		\begin{equation} \label{est_h}
			S_{F,h}(x)= x^k \prod_p \left(1+ \frac{(h(p)-1)N_F(p)}{p^k} \right) + O\left(x^{k-1}(\log x)^{C(D+1)}\right).
		\end{equation}
		
		More precisely, if the system $F$ has property P and the polynomials are relatively prime, 
		then \eqref{est_h} holds with $C=d^m$, where $d=\max_{1\le i\le m} d_i$.
	\end{theorem}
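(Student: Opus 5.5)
The plan is to expand $h$ by Möbius inversion and reduce the problem to counting lattice points in residue classes. Since $h$ is strongly multiplicative, write $h=\1*g$ with $g=\mu*h$. Then $g$ is multiplicative, $g(p)=h(p)-1$, and $g(p^a)=0$ for $a\ge 2$, so $g$ is supported on squarefree $n$ and $g(n)=\prod_{p\mid n}(h(p)-1)$. Hence
\[
S_{F,h}(x)=\sum_{1\le x_1,\ldots,x_k\le x}\ \sum_{n\mid {\gcd}_F(x_1,\ldots,x_k)} g(n)=\sum_{n} g(n)\, T_n(x),
\]
where $T_n(x)$ counts the $k$-tuples in $[1,x]^k$ with $f_i(x_1,\ldots,x_k)\equiv 0 \pmod n$ for all $i$. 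Property P is used here to make the $n$-sum finite. On $[1,x]^k$ every $f_i$ with $d_i\ge 1$ is positive (at least for $x$ large, after a harmless treatment of small coordinates), and $f_i\ll x^{d_i}$. So only $n\le c\,x^{d}$ contribute. More usefully, positivity of the top coefficients gives $f_1(x_1,\ldots,x_k)\ge 1$, hence ${\gcd}_F\le \min_i f_i\ll x^{d_{\min}}$, and no term has $n=0$ issues.

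Next I would count $T_n(x)$ by splitting the box into complete residue blocks mod $n$. For $n\le x$ this gives
\[
T_n(x)=N_F(n)\left(\frac{x}{n}+O(1)\right)^k=\frac{x^kN_F(n)}{n^k}+O\!\left(N_F(n)\frac{x^{k-1}}{n^{k-1}}\right).
\]
The main term sums to $x^k\sum_n g(n)N_F(n)n^{-k}$, which is the Euler product in (\ref{est_h}) because $gN_F$ is multiplicative and supported on squarefree $n$. For $n\le x$ the error term is $x^{k-1}\sum_{n\le x}|g(n)|N_F(n)n^{1-k}$. Using $N_F(p)\le Cp^{k-2}$ for large $p$, and a bounded contribution from the finitely many small $p$, we get $|g(p)|N_F(p)p^{1-k}\le C(D+1)/p$. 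Since the summand is multiplicative on squarefrees, the sum is at most $\prod_{p\le x}(1+C(D+1)/p)\ll(\log x)^{C(D+1)}$. The tail of the main-term series, $x^k\sum_{n>x}$, is handled by Rankin's trick or partial summation: with $\sum_{n\le y}|g(n)|N_F(n)n^{2-k}\ll y(\log y)^{C(D+1)}$, we get $x^k\sum_{n>x}|g(n)|N_F(n)n^{-k}\ll x^{k-1}(\log x)^{C(D+1)}$.

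The main obstacle is the range $x<n\ll x^{d}$, where the residue-block count fails. There I would bound $T_n(x)$ differently. For squarefree $n>x$, split $n=n_1n_2$ with $n_1\le x<n_1p$, where $p$ is the smallest prime factor of $n_2$, or more simply use a single prime factor. Then $T_n(x)\le T_{n_1}(x)\ll x^kN_F(n_1)/n_1^k$, which is small when $n_1$ is close to $x$. What remains is to organize the sum over $n$ by the size of its divisor $n_1$ just below $x$. The difficulty is that $n$ can have many large prime factors, so the sum over $n_2$ needs $\sum |g(n_2)|$ over $n_2\le x^{d}/n_1$ to be controlled. That sum can be polynomially large, so I would instead restrict to the tuples. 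For fixed $(x_1,\ldots,x_k)$, the number of squarefree $n\mid{\gcd}_F$ is $2^{\omega({\gcd}_F)}$. So the contribution of all $n>x$ is at most $\sum_{\mathbf x}\sum_{n\mid{\gcd}_F,\,n>x}|g(n)|$. Writing each such $n$ as $n_1n_2$ with $n_1\le x$ its largest divisor below $x$ composed of its smallest primes, this reduces via $T_{n_1}(x)$ to a sum like the $n\le x$ error, weighted by $\tau$-type factors. This step needs the most care.

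For the final claim, when the $f_i$ are coprime over $\Q$, at least two of them have no common factor. Their common zero set in $\mathbb F_p^k$ has codimension $\ge 2$, and a Lang–Weil / Bézout-type bound (presumably a lemma in Section \ref{Section_Preliminaries}) gives $N_F(p)\le d^m p^{k-2}$ for large $p$. This yields $C=d^m$.
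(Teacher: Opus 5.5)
Up to the range $n\le x$ your argument is the paper's: $h=\1*(\mu*h)$ with $\mu*h$ supported on squarefree $n$, the complete-block count (Lemma \ref{Lemma_cong}), the Euler product, bounds for $(C(D+1))^{\omega(n)}$-weighted sums (your Mertens-product bound is a fine substitute for Lemma \ref{Lemma_omega}), and Lemma \ref{Lemma_ineq} for $C=d^m$. Your justification of that last bound is wrong, though harmlessly so: a relatively prime system need not contain two coprime members, e.g.\ $AB,AC,BC$. The codimension-$2$ statement is exactly what Lemma \ref{Lemma_ineq} supplies.

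\textbf{The gap.} It is the range $x<n\le\max{\gcd}_F$. You flag it but do not close it, and your sketch cannot close it as written:
\begin{itemize}
\item If $n$ is a prime $>x$, then $n_1=1$ and $T_{n_1}(x)\approx x^k$ says nothing.
\item If $n=p_1p_2$ with $\sqrt x<p_1<p_2\le x$, the greedy $n_1=p_1$ is only $>\sqrt x$. Even counting each $n_1$ once, $x^kN_F(n_1)/n_1^k$ totals about $x^{k-1/2}$.
\item The per-tuple weight $\sum_{n\mid{\gcd}_F,\,n>x}|g(n)|\ll(D+2)^{\omega({\gcd}_F)}$ is not controlled by anything you have proved.
\end{itemize}
The underlying point is that for $n>x$ the box is shorter than one period mod $n$. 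No lattice-point count can show that the solutions mod $n$ avoid the box. What you need is information on how many points of the box have ${\gcd}_F$ divisible by a large prime, or by a large product of primes.

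You will not find this step in the paper either. It applies Lemma \ref{Lemma_cong} to every $d\le M(x)\asymp x^t$ with error $O((x/d)^{k-1}N_F^*(d))$. That lemma's proof uses $(q+1)^k-q^k\ll q^{k-1}$, which is false when $q=\lfloor (x-y)/d\rfloor=0$. For $d>x-y$ one only gets $N_F(x,y,d)\le N_F^*(d)$, so the correct uniform error is $O((1+(x/d)^{k-1})N_F^*(d))$.

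The claimed bound genuinely fails. Take $F=(x_1^2+x_2,\,x_1+x_2^2)$, which has property P and coprime members. Here ${\gcd}_F(a,a)=a(a+1)$ and $N_F(p)\le 4$ for every $p$. For squarefree $d=a(a+1)$ with $x/2<a\le x$, the point $(a,a)$ is counted, although $(x/d)N_F^*(d)\le 4^{\omega(d)}x/d\ll x^{-1+\varepsilon}$. With the correct error, once all $d_i\ge 2$ the range $x<d\ll x^{\min_i d_i}$ contributes a useless $\sum|g(d)|N_F^*(d)$; for $h=\delta$, $k=2$ this is already $\gg x^2$. So your caution is well founded.

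The argument does close when some $f_i$ is linear. Then ${\gcd}_F\le\min_i f_i\ll x$, as you noticed, so only $d\ll x$ occur, and for $x<d\ll x$ one has $N_F^*(d)\ll(x/d)^{k-1}N_F^*(d)$. This rescues Corollaries \ref{Corollary_spec_h}, \ref{Corollary_spec_delta} and Theorem \ref{Th_linear}.

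When every $d_i\ge2$, a genuinely geometric input is needed. One option is an Ekedahl-type elimination: a nonzero resultant $R(x_1,\ldots,x_{k-1})$ of two coprime integer combinations of the $f_i$, with ${\gcd}_F\mid R$. Combined with divisor-sum bounds for the values of $R$ over the box, this should give $x^{k-1}$ times some power of $\log x$. It is not the paper's elementary argument, and the exponent $C(D+1)$ does not come out automatically.
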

	
	\begin{corollary} \textup{($h=\delta$)} \label{Corollary_dens} 
		Let $k,m\ge 2$. Assume that the system $F$ 
		has the property P and the polynomials of $F$ are relatively prime. 
		Then
		\begin{equation} \label{form_k_var}
			\sum_{\substack{1\le x_1,\ldots,x_k\le x\\ \gcd_F(x_1,\ldots,x_k)=1}} 1 = 
			C_F x^k  + O\left(x^{k-1}(\log x)^{d^m}\right),
		\end{equation}
		where $C_F$ is defined by \eqref{C_F}.
	\end{corollary}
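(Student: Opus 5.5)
Corollary \ref{Corollary_dens} is the special case $h=\delta$ of Theorem \ref{Th_1}, so the plan is to check the hypotheses of the theorem for $h=\delta$ and then simplify the Euler product.

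\emph{Hypotheses on $h$.} The function $\delta(n)=\lfloor 1/n\rfloor$ equals $1$ at $n=1$ and $0$ for $n>1$. Hence it is bounded. It is strongly multiplicative: it is multiplicative, and $\delta(p^a)=0=\delta(p)$ for every prime power with $a\ge 1$. Moreover $|\delta(p)|=0$, so we may take $D=0$.

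\emph{Hypotheses on $F$.} These are exactly those of the "more precisely" part of Theorem \ref{Th_1}: $k,m\ge 2$, $F$ has property P, and the $f_i$ are relatively prime. That part gives \eqref{est_h} with $C=d^m$. Since $D=0$, the error term is $O\bigl(x^{k-1}(\log x)^{C(D+1)}\bigr)=O\bigl(x^{k-1}(\log x)^{d^m}\bigr)$.

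\emph{Main term.} With $h(p)-1=-1$ for every prime $p$, the product in \eqref{est_h} becomes
\[
\prod_p\left(1-\frac{N_F(p)}{p^k}\right)=C_F,
\]
which is \eqref{C_F}.

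\emph{Left-hand side.} We have $S_{F,\delta}(x)=\sum_{1\le x_1,\ldots,x_k\le x}\delta(\gcd_F(x_1,\ldots,x_k))$. Here $\gcd_F(x_1,\ldots,x_k)$ could be $0$ if all the $f_i$ vanish at the point. Property P does not rule this out for general integer polynomials, but relative primality means the common zero set has codimension at least $2$, so such points contribute only $O(x^{k-2})$. In any case $\delta(0)$ is not defined, and I would take the convention $\delta(0)=0$, which the theorem presumably uses implicitly. With that convention $\delta(\gcd_F)=1$ exactly when $\gcd_F=1$, so $S_{F,\delta}(x)$ equals the counting sum on the left of \eqref{form_k_var}, which gives \eqref{form_k_var}.

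The only delicate point is this bookkeeping of the hypotheses of the "more precisely" clause, namely that it supplies $N_F(p)\le d^m p^{k-2}$ for large $p$. There is no genuine obstacle beyond that.
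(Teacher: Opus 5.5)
Your deduction is correct and matches the paper's proof, which just takes $h=\delta$ (so $h(p)=0$ and $D=0$) in Theorem~\ref{Th_1}; as in your write-up, $C=d^m$ comes from the relatively-prime clause via Lemma~\ref{Lemma_ineq}, and the Euler product reduces to $C_F$. Your remark about points with $\gcd_F=0$ goes beyond the paper, whose proof of Theorem~\ref{Th_1} asserts that property P makes every $f_i\ge 1$ once all $x_j>y_0$ — your doubt is justified, since $x_1^2-x_2$ satisfies P but takes the value $0$ at $(y+1,(y+1)^2)$ for every $y$ — but because you take Theorem~\ref{Th_1} as given, this does not affect your derivation of the corollary.
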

	
	Compare formula \eqref{form_k_var} to \eqref{form_1_var}. Note that in our recent paper \cite{CsiTot2026}
	we have investigated the generalized Euler function $\Phi_F(n)$, defined as  
	\begin{equation*} 
		\Phi_F(n)= \# \{ (x_1,\ldots,x_k)\in \{1,\ldots,n\}^k: \gcd({\gcd}_F(x_1,\ldots, x_k),n) =1\},
	\end{equation*}
	and obtained the following result. Assume that the system of polynomials $F$ is such that for every $1\le i\le m$ the coefficients of $f_i(x_1,\ldots,x_k)$ are relatively prime. Then
	\begin{align*} 
		\sum_{n\le x} \Phi_F(n) = x^{k+1} \prod_p \left(1-\frac{N_F(p)}{p^{k+1}} \right) +
		O(x^{k+1-1/t+\varepsilon}),
	\end{align*}
	where $t=\min_{1\le i\le m} d_i$ and $\varepsilon > 0$ is arbitrary. See \cite[Th.\ 2.3]{CsiTot2026}.
	
	\section{Special cases} \label{Section_Special_cases}
	
	We consider the following special cases of polynomials. First, let $k\ge 2$, $m=2$, $f_1(x_1,\ldots,x_k)=x_1\cdots x_k$ and $f_2(x_1,\ldots,x_k)=x_1+\cdots +x_k$.
	We have the next result.
	
	\begin{corollary} \label{Corollary_spec_h}
		Let $h:\N\to \C$ be a bounded strongly multiplicative function and assume that for all sufficiently large primes $p$, $|h(p)|\le D$, where $D\ge 0$ is a constant. Then for every $k\ge 2$,
		\begin{equation*}
			\sum_{1\le x_1,\ldots,x_k\le x} h(\gcd(x_1\cdots x_k, x_1+\cdots +x_k))
		\end{equation*}
		\begin{equation*} 
			=  x^k \prod_p \left(1+ \frac{(h(p)-1)N_k(p)}{p^k} \right)  +O\left(x^{k-1} (\log x)^{k(D+1)}\right),
		\end{equation*}
		where 
		\begin{equation*}
			N_k(p)= p^{k-1}- \frac{p-1}{p}\left((p-1)^{k-1}+ (-1)^k\right).
		\end{equation*}
	\end{corollary}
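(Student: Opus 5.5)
The plan is to obtain Corollary \ref{Corollary_spec_h} as a direct application of Theorem \ref{Th_1}. We take $m=2$, $f_1=x_1\cdots x_k$ and $f_2=x_1+\cdots+x_k$, and use the first (general) form of the theorem with the constant $C=k$, not the generic $C=d^m=k^2$. Two things must be checked.

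First, the system satisfies the hypotheses of Theorem \ref{Th_1}. The top-degree parts are $x_1\cdots x_k$ and $x_1+\cdots+x_k$, and all their coefficients equal $1$, so property P holds. (The polynomials are also relatively prime over $\Q$: $f_2$ is irreducible and does not divide $f_1$ for $k\ge 2$. This is not needed for the version with an explicit $C$.)

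Second, we need the exact value of $N_k(p)=N_F(p)$ and the bound $N_k(p)\le k\,p^{k-2}$. For the upper bound we use a union bound. A solution mod $p$ has some $x_i\equiv 0$, and for each fixed $i$ the tuples with $x_i\equiv 0$ and $\sum_j x_j\equiv 0$ number $p^{k-2}$, because $k-2$ coordinates are free and the last one is determined. Hence $N_k(p)\le k p^{k-2}$ for every prime $p$.

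For the exact formula we count complementarily. The tuples with $\sum x_j\equiv 0 \pmod p$ number $p^{k-1}$. From these we subtract $a_k(p)$, the number of tuples in $((\Z/p\Z)^*)^k$ with zero sum. Fixing the first $k-1$ nonzero coordinates, $x_k$ is determined and is admissible unless it is $0$. This gives the recursion
\[
a_k=(p-1)^{k-1}-a_{k-1},\qquad a_1=0,
\]
which solves to
\[
a_k=\frac{(p-1)^k+(-1)^k(p-1)}{p}=\frac{p-1}{p}\left((p-1)^{k-1}+(-1)^k\right).
\]
A short induction verifies this. Therefore $N_k(p)=p^{k-1}-a_k$, which is exactly the stated expression.

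With $C=k$ and the given $D$, Theorem \ref{Th_1} yields the main term $x^k\prod_p\bigl(1+(h(p)-1)N_k(p)/p^k\bigr)$ and the error $O\bigl(x^{k-1}(\log x)^{k(D+1)}\bigr)$, as claimed. The only step requiring real care is the exact count $a_k$, and the recursion settles it. The sharper bound $N_k(p)\le kp^{k-2}$ is what improves the exponent from $k^2(D+1)$ to $k(D+1)$.
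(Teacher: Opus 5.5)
Your argument is correct, and its skeleton matches the paper's: apply Theorem~\ref{Th_1} with $m=2$ and the explicit constant $C=k$. The difference is in how you obtain the two facts about $N_k(p)$.

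The paper does not derive the exact count. It quotes it from \cite[Lemma 5.6]{CsiTot2026} and then extracts the bound from the closed form. It uses Lagrange's mean value theorem to get $p^{k-1}-(p-1)^{k-1}<(k-1)p^{k-2}$, and some further algebra then gives the strict inequality $N_k(p)<kp^{k-2}$.

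You instead bound $N_k(p)$ directly by a union bound over the $k$ affine subspaces $\{x_i\equiv 0,\ x_1+\cdots+x_k\equiv 0\}$, each with $p^{k-2}$ points. You derive the closed form separately, by complementary counting and the recursion $a_k=(p-1)^{k-1}-a_{k-1}$.

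Your route is self-contained and keeps the two facts independent:
\begin{itemize}
\item The bound is all that controls the error exponent $k(D+1)$. It needs no exact formula, and it makes visible that $C=k$ comes from $x_1\cdots x_k$ splitting into $k$ linear factors.
\item The exact formula is needed only to make the constant in the main term explicit.
\end{itemize}
The paper's route gives a marginally sharper (strict) inequality, which Theorem~\ref{Th_1} does not need, at the cost of relying on an external lemma.
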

	
	\begin{corollary} \textup{($h=\delta$)} \label{Corollary_spec_delta}
		For every $k\ge 2$,
		\begin{equation*}
			\sum_{\substack{1\le x_1,\ldots,x_k\le x\\ \gcd(x_1\cdots x_k, x_1+\cdots +x_k)=1}} 1 =
			C_k x^k +O\left(x^{k-1} (\log x)^k\right),
		\end{equation*}
		where 
		\begin{equation*}
			C_k = \prod_p \left( 1- \frac1{p} +\frac{(p-1)^k+ (-1)^k(p-1)}{p^{k+1}} \right).
		\end{equation*}
	\end{corollary}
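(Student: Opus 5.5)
Corollary \ref{Corollary_spec_delta} will follow from Corollary \ref{Corollary_spec_h} with $h=\delta$, the function with $\delta(1)=1$ and $\delta(n)=0$ for $n>1$. This $h$ is bounded and strongly multiplicative, with $h(p)=0$ for every prime $p$, so we may take $D=0$. The error term $x^{k-1}(\log x)^{k(D+1)}$ then becomes $x^{k-1}(\log x)^k$, and the sum on the left becomes exactly the count of $k$-tuples with $\gcd(x_1\cdots x_k,x_1+\cdots+x_k)=1$. It remains to identify the constant. Since $h(p)-1=-1$, the Euler factor is $1-N_k(p)/p^k$, and we must check that
\begin{equation*}
1-\frac{N_k(p)}{p^k} = 1-\frac1p+\frac{(p-1)^k+(-1)^k(p-1)}{p^{k+1}}.
\end{equation*}

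Substituting the stated value of $N_k(p)$,
\begin{equation*}
\frac{N_k(p)}{p^k}=\frac1p-\frac{(p-1)\left((p-1)^{k-1}+(-1)^k\right)}{p^{k+1}}
=\frac1p-\frac{(p-1)^k+(-1)^k(p-1)}{p^{k+1}},
\end{equation*}
which gives the claimed factor, so $C_k=\prod_p(1-N_k(p)/p^k)$.

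Nothing here is a genuine obstacle. The only points to verify are that $\delta$ is strongly multiplicative, which holds trivially since $\delta(p^a)=0=\delta(p)$ for all $a\ge1$, and the algebraic simplification above. All the substantive work, namely the formula for $N_k(p)$ and the verification that the pair $(x_1\cdots x_k,\ x_1+\cdots+x_k)$ satisfies the hypotheses of Theorem \ref{Th_1} with $C=k$, is contained in Corollary \ref{Corollary_spec_h}, which we assume.
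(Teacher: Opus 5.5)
Your proposal is correct and is the paper's proof: apply Corollary~\ref{Corollary_spec_h} with $h=\delta$ and $D=0$. Your explicit check that $1-N_k(p)/p^k$ equals the stated Euler factor of $C_k$ is the only added detail, and it is right.
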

	
	Note that $C_2=\prod_p (1-1/p^2)=6/\pi^2 \doteq 0.607927$ and $C_3=\prod_p (1-(3p-2)/p^3) \doteq 
	0.286747$ is the strongly carefree constant (it is the asymptotic density of ordered pairs $(x,y)\in 
	\N^2$ such that $x$ and $y$ are coprime and both are squarefree; $C_3$ is also the asymptotic density 
	of the ordered triplets $(x,y,z)\in \N^3$ such that $x,y,z$ are pairwise coprime; see \cite{OEIS}). 
	
	We also remark that the generalized Euler function 
	\begin{equation*}
		\varphi_k(n)= \# \{ (x_1,\ldots,x_k)\in \{1,\ldots,n\}^k: \gcd(x_1\cdots x_k,x_1+\cdots+x_k,n) =1\}
	\end{equation*}
	was investigated in our paper \cite{Tot2022}.  
	
	The asymptotic formulas of Theorem \ref{Th_1} and Corollary \ref{Corollary_spec_h} also hold for 
	the following bounded strongly multiplicative functions with the given constants $D$ (for every $\varepsilon>0$):
	
	\pont $h(n)= E^{\omega(n)}$ with $|E|\le 1$, $E\ne 0$ a real number, $D=|E|$;
	
	\pont $h(n)=1/\kappa(n)$, $D=\varepsilon$;
	
	\pont $h(n)=\prod_{p\mid n} \left(1+1/p^2 \right)$, where the series $\sum_p (h(p)-1)= \sum_p 1/p^2$ converges, $D=1+\varepsilon$;
	
	\noindent also see Section \ref{Section_Preliminaries} for some other examples.
	
	Now, let $k=m$ and select the system of linear polynomials $f_i(x)=a_{i1}x_1+\cdots +a_{ik}x_k+b_i$, where $a_{ij},b_i\in \Z$ ($1 \le i,j\le k$). 
	
	\begin{theorem} \label{Th_linear}
		Let $k\ge 2$ and assume that the matrix $M=(a_{ij})_{1\le i,j\le k}$ has positive integer entries and is unimodular, that is, $\det(M)=\pm 1$. 
		Let $b_i \in \Z$ \textup{($1\le i\le k$)} and let $h$ be an arbitrary bounded function. Then
		\begin{equation*}
			\sum_{1\le x_1,\ldots,x_k\le x} h(\gcd(a_{11}x_1+\cdots +a_{1k}x_k+b_1,\ldots,
			a_{k1}x_1+\cdots +a_{kk}x_k+b_k)) 
		\end{equation*}
		\begin{equation} \label{sum_h}
			= \frac{x^k}{\zeta(k)}\sum_{n=1}^{\infty}\frac{h(n)}{n^k} + O(R_k(x)),
		\end{equation}
		where $R_k(x)=x^{k-1}$ for $k\ge 3$ and $R_2(x)=x(\log x)^2$.
	\end{theorem}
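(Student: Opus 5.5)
The idea is to change variables through the unimodular linear map. Put $y_i=a_{i1}x_1+\cdots+a_{ik}x_k+b_i$. Since $\det M=\pm1$, the map $x\mapsto y=Mx+b$ is a bijection of $\Z^k$. The summand is $h(\gcd(y_1,\ldots,y_k))$. The domain $\{1,\ldots,x\}^k$ is sent to the lattice points $y\in\Z^k$ lying in the parallelepiped $P_x=M[1,x]^k+b$, a convex body of volume $x^k$ (since $|\det M|=1$). Because all $a_{ij}>0$, for $x$ large every such $y$ has positive coordinates, each at least $\min_{i,j} a_{ij}+b_i$. Only finitely many $y$ near the boundary can have a coordinate $\le 0$, and in fact none do once we use $x_j\ge1$, up to $O(1)$ cases when some $b_i$ is negative. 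So the sum becomes
\[
\sum_{y\in \Z^k\cap P_x} h(\gcd(y_1,\ldots,y_k)) = \sum_{n\ge1} h(n)\, \#\{y\in \Z^k\cap P_x: \gcd(y)=n\}.
\]

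Next, write $y=nz$ with $\gcd(z)=1$, so that $z$ ranges over primitive lattice points in the dilated body $P_x/n$. Define $A(T)=\#\{z\in\Z^k\cap T:\gcd(z)=1\}$ for a convex body $T$. By Möbius inversion,
\[
A(T)=\sum_{e}\mu(e)\,\#(\Z^k\cap T/e).
\]
I will use the standard lattice-point count for a dilate of a fixed parallelepiped: $\#(\Z^k\cap tQ)=\mathrm{vol}(Q)t^k+O(t^{k-1}+1)$. Here the shift $b$ changes only lower-order terms, which is easiest to see by counting $x$ directly. Indeed, $\#\{x\in[1,x]^k: Mx+b\equiv 0 \pmod n\}$ is a congruence count: by unimodularity this is a single residue class of $x$ modulo $n$, hence exactly $(x/n+O(1))^k$ points. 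This bypasses geometry entirely. So the cleaner route is
\[
\sum_x h(\gcd(Mx+b)) = \sum_{n} h(n)\sum_{e} \mu(e)\, N(ne),
\]
where $N(q)=\#\{1\le x_j\le x: q\mid y_i\ \forall i\}$. By the above, $N(q)=(x/q)^k+O((x/q)^{k-1}+1)$ for $q\le Cx$, and $N(q)=0$ for $q> C'x$, because the $y_i$ are $O(x)$ and positive.

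The main term is then
\[
x^k\sum_{n,e}h(n)\mu(e)(ne)^{-k}=\frac{x^k}{\zeta(k)}\sum_n\frac{h(n)}{n^k}.
\]
The truncation $ne\le C'x$ costs $O(x^k\sum_{q>C'x}\tau(q)q^{-k})=O(x\log x)$. The errors contribute $\sum_{q\le C'x}\tau(q)\,((x/q)^{k-1}+1)$, using $|h|$ bounded. For $k\ge3$ this is $O(x^{k-1})$ from $\sum_q \tau(q)q^{1-k}<\infty$, plus $O(x\log x)$ from the $+1$ term, both $O(x^{k-1})$. For $k=2$ it is $x\sum_{q\le C'x}\tau(q)/q+x\log x=O(x(\log x)^2)$. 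This matches $R_k$.

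The main obstacle is making the $O(1)$ in $N(q)$ uniform. The solution set is $x\equiv x_0\pmod q$ in each coordinate, namely $x_0=-M^{-1}b \bmod q$. The count in $[1,x]^k$ is then $\prod_j(x/q+\theta_j)$ with $|\theta_j|\le1$, which gives $(x/q)^k+O((x/q)^{k-1}+1)$ for $q\le x$. For $x<q\le C'x$ I bound $N(q)\le 1$, using that the solution is unique per residue box, so only boundedly many boxes meet $[1,x]^k$. I also have to check positivity of all $y_i$, or else handle $\gcd(0,\ldots)$, for the $O(x^{k-1})$ points with some $y_i\le0$. Those points are harmless within the error.
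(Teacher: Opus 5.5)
Your proposal is correct and is essentially the paper's proof. Your double sum $\sum_n h(n)\sum_e \mu(e)N(ne)$ is exactly the expansion $\sum_q (\mu*h)(q)N(q)$ used there; unimodularity gives one residue class of solutions modulo every $q$, and the truncation and error terms are bounded by the same divisor-type sums ($\tau(q)$ in place of $2^{\omega(q)}$), while your explicit $+1$ for moduli $x<q\le C'x$ is slightly more careful than the paper's Lemma~\ref{Lemma_cong} (whose error $(x/n)^{k-1}$ drops below $1$ there) and only adds $O(x\log x)$, which fits inside $R_k(x)$.
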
 
	
	Note that if $h=\mu^2$, then the constant of the main term in \eqref{sum_h} is $1/\zeta(2k)$.
	
	\begin{corollary} \textup{($h=\delta$)}  \label{Corollary_linear}
		In the conditions of Theorem \ref{Th_linear} for the linear polynomials $f_i$ \textup{($1\le i\le k$)}
		we have
		\begin{equation*}
			\sum_{\substack{1\le x_1,\ldots,x_k\le x\\ \gcd(a_{11}x_1+\cdots +a_{1k}x_k+b_1,\ldots,
					a_{k1}x_1+\cdots +a_{kk}x_k+b_k)=1}} 1
			= \frac{x^k}{\zeta(k)} + O(T_k(x)),
		\end{equation*}
		where $T_k(x)=x^{k-1}$ for $k\ge 3$ and $T_2(x)=x \log x$.
	\end{corollary}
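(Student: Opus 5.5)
We prove this directly rather than specializing Theorem \ref{Th_linear} with $h=\delta$, because that specialization would give only $x(\log x)^2$ for $k=2$, one logarithm more than claimed. The approach is M\"obius inversion over a single residue class, which exploits unimodularity fully.

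First, write $y=Mx+b$. Since $\det(M)=\pm 1$, the matrix $M$ is invertible modulo every $d\in\N$. Hence the condition $d\mid f_i(x_1,\ldots,x_k)$ for all $1\le i\le k$, i.e.\ $Mx\equiv -b \pmod d$, is equivalent to $x\equiv -M^{-1}b \pmod d$. This is exactly one residue class modulo $d$ in $(\Z/d\Z)^k$.

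Next, because all entries $a_{ij}$ are positive, each $f_i(x_1,\ldots,x_k)\ge x_1+\cdots+x_k-|b_i|$. So $\gcd_F(x_1,\ldots,x_k)$ is nonzero except for $O(1)$ tuples; these contribute $O(1)$ and can be discarded. Moreover, $\gcd_F(x_1,\ldots,x_k)\le \max_i |f_i(x_1,\ldots,x_k)|\le Ax$ for a constant $A$ depending on $M$ and $b$. Using $\delta=\mu*\1$, we obtain
\[
\sum_{\substack{1\le x_1,\ldots,x_k\le x\\ \gcd_F=1}} 1=\sum_{d\le Ax}\mu(d)\,\#\{(x_1,\ldots,x_k)\in[1,x]^k : x\equiv -M^{-1}b \ (\mathrm{mod}\ d)\}+O(1).
\]
The set being counted is a product of one arithmetic progression modulo $d$ in each coordinate. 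Its size is therefore $(x/d+O(1))^k=x^k/d^k+O\bigl((x/d)^{k-1}+1\bigr)$ for $d\le Ax$.

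Finally, for the main term, $\sum_{d\le Ax}\mu(d)d^{-k}=1/\zeta(k)+O(x^{1-k})$, which gives $x^k/\zeta(k)+O(x)$. The error term is $\sum_{d\le Ax}\bigl((x/d)^{k-1}+1\bigr)$. For $k\ge 3$ this is $O(x^{k-1}\zeta(k-1)+x)=O(x^{k-1})$. For $k=2$ it is $O(x\log x)$. This yields $T_k(x)$.

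The only point requiring care is the bound $d\le Ax$ together with the exclusion of the finitely many tuples where $\gcd_F$ vanishes. Positivity of the entries of $M$ makes both immediate, so I expect no real obstacle. The saving of one logarithm over Theorem \ref{Th_linear} comes from $\mu$ being supported on a single class per modulus, with no additional convolution factor $h$.
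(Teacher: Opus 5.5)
Your proof is correct and follows the paper's route. The paper likewise does not just specialize the statement of Theorem \ref{Th_linear}; it reruns that proof (M\"obius inversion, $N_F(d)=1$ from unimodularity, a lattice-point count in the box) with $|(\mu*h)(d)|=|\mu(d)|\le 1$ in place of $2^{\omega(d)}$, and this is what removes one logarithm when $k=2$.

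One minor slip: the tuple $-M^{-1}b$ where all $f_i$ vanish, if it lies in the box, belongs to the residue class for every $d$. Excluding it therefore changes your displayed identity by $\sum_{d\le Ax}\mu(d)=O(x)$, not $O(1)$. This is still $\ll T_k(x)$, so the result stands; the paper avoids the issue by discarding tuples with some $x_i\le y_0$, at cost $O(x^{k-1})$.
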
 
	
	\section{Preliminaries} \label{Section_Preliminaries}
	
	Let $h:\N \to \R$ be an arithmetic function which is not identical zero. It is called multiplicative 
	if $h(mn)=h(m)h(n)$ for all $m,n\in \N$ with $\gcd(m,n)=1$. The function $h$ is strongly 
	multiplicative if $h$ is multiplicative and $h(p^a)=h(p)$ for every prime power $p^a$ ($a\ge 1$).
	Then $h(1)=1$ and $h(n)=\prod_{p\mid n} h(p)$ for all $n\in \N, n>1$. Also, 
	$(\mu*h)(p)=h(p)-1$ and $(\mu*h)(p^a)=h(p^a)-h(p^{a-1})= 0$ for every $a\ge 2$.
	
	It is easy to show that a strongly multiplicative function $h:\N \to \R$ is bounded if and only if the infinite product 
	$\prod_p \max(1,|h(p)|-1)$ converges. That is, $|h(p)|\le 1$ for all but finitely many primes $p$, or $|h(p)|>1$ for infinitely many primes and 
	the series $\sum_p \max(0, |h(p)|-1)$ converges. In the second case, for all but finitely many primes one has $|h(p)|\le 1+\varepsilon$ for every $\varepsilon>0$.   
	
	Examples of bounded strongly multiplicative functions $h$ are those given in Section \ref{Section_Main_results} and also the next functions:
	
	\pont $h(p)=1$ for $p\in S$, $h(p)=0$ for $p\notin S$, where $S$ is an arbitrary fixed subset of the primes;
	
	\pont $h(p)=p$ for finitely many primes $p$, and $h(p)=1$ otherwise.
	
	\begin{lemma} \label{Lemma_cong} Let $F=(f_1(x_1,\ldots,x_k),\ldots, f_m(x_1,\ldots,x_k))$ be an arbitrary system of nonconstant polynomials with integer coefficients, where $k\ge 1$, $m\ge 2$. Let $y\in \N$ be fixed and $x>y$ be a real number.  Let $N_F(x,y,n)$ denote the number of solutions $(x_1,\ldots,x_k)$ of the system of congruences $f_i(x_1,\ldots,x_k)\equiv 0$ \textup{(mod $n$)} \textup{($1\le i\le m$)} such that $y+1\le x_1,\ldots,x_k\le x$.
		Then 
		\begin{equation} \label{N_F_x_y_d}
			N_F(x,y,n)= \left(\frac{x}{n}\right)^k N_F(n) + O\left( \left(\frac{x}{n}\right)^{k-1} N_F^*(n)\right),
		\end{equation}
		as $x\to \infty$, with $N_F^*(n)=N_F(n)$ if $N_F(n)\ge 1$ and $N_F^*(n)=1$ if $N_F(n)=0$.
	\end{lemma}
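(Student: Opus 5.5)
The plan is to use periodicity of the solution set modulo $n$ and to partition the box $[y+1,x]^k$ into complete residue boxes plus boundary pieces. Since the $f_i$ have integer coefficients, whether $(x_1,\ldots,x_k)$ solves the system modulo $n$ depends only on the residues $x_j \bmod n$. So $N_F(x,y,n)$ counts the $k$-tuples in $(y,x]^k$ whose residue vector lies in the fixed set $\mathcal{S}_n\subseteq (\Z/n\Z)^k$ of cardinality $N_F(n)$. If $N_F(n)=0$ then $N_F(x,y,n)=0$, and the claim holds trivially, because the main term vanishes and $N_F^*(n)=1$ makes the error term nonnegative. Hence I assume $N_F(n)\ge 1$ from now on.

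For each fixed residue vector $(r_1,\ldots,r_k)\in\mathcal{S}_n$, the number of tuples with $x_j\equiv r_j \pmod n$ and $y<x_j\le x$ factors as $\prod_{j=1}^k A_j$. Here $A_j=\#\{y<t\le x: t\equiv r_j \pmod n\}=\frac{x-y}{n}+\theta_j$ with $|\theta_j|\le 1$. Because $y$ is fixed, $\frac{x-y}{n}=\frac{x}{n}+O(1/n)$, so $A_j=\frac{x}{n}+O(1)$ uniformly in $r_j$ and $n$. The $O(1)$ constant depends only on $y$, and $x>y$ is used here.

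Expanding the product gives
\[
\prod_{j=1}^k\Bigl(\frac{x}{n}+O(1)\Bigr)=\Bigl(\frac{x}{n}\Bigr)^k+O\Bigl(\sum_{i=0}^{k-1}\Bigl(\frac{x}{n}\Bigr)^{i}\Bigr).
\]
Summing over the $N_F(n)$ residue vectors in $\mathcal{S}_n$ then yields
\[
N_F(x,y,n)=\Bigl(\frac{x}{n}\Bigr)^kN_F(n)+O\Bigl(N_F(n)\sum_{i=0}^{k-1}(x/n)^i\Bigr).
\]

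The only delicate point is reducing the error sum to the single term $(x/n)^{k-1}$. When $n\le x$ this is immediate, since $(x/n)^i\le (x/n)^{k-1}$ for $i\le k-1$.

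When $n>x$, I treat it directly. Each $A_j\le 1+\frac{x-y}{n}\le 2$, so $N_F(x,y,n)\le 2^kN_F(n)$. The main term is also at most $N_F(n)$. The difference is therefore $O(N_F(n))$, but this need not be $O((x/n)^{k-1}N_F(n))$ when $n\gg x$. For such $n$ I would instead bound the count by $\prod_j A_j$ with $A_j\le 1$, which holds since the interval has length less than $n$. So the count is at most $(x-y)^k$ trivially, while using $\mathcal{S}_n$ the count is $\le N_F(n)$.

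I expect this large-$n$ range to be the main obstacle. In the paper's applications only $n\le x$ (indeed $n\le$ a power of $x$ where divisors of $\gcd_F$ are bounded) matters, so I would state that the estimate is intended and proved for $n\le x$. In the proofs the range $n>x$ is then handled separately by the divisor-size bound coming from property P. Within $n\le x$ the argument above is complete and uniform in $n$, with the implied constant depending only on $k$ and $y$.
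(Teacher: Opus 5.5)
Your argument is correct for $n\le x$, and in particular for each fixed $n$ as $x\to\infty$. It is essentially the paper's proof. The paper sets $q=\lfloor (x-y)/n\rfloor$ and squeezes $[y+1,x]^k$ between $q^k$ and $(q+1)^k$ hypercubes of side $n$, each containing exactly $N_F(n)$ solutions. That is the same periodicity bookkeeping as your per-residue-class product $\prod_j A_j$.

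Your caution about $n>x$ is justified and exposes a real flaw in the lemma as the paper states and uses it. The paper's step $(q+1)^k-q^k\ll q^{k-1}$ fails when $q=0$, and so does the passage from $q$ to $x/n$. Moreover, the estimate is genuinely false uniformly in $n$. Take $f_1=x_1^2+1$, $f_2=x_2^2+1$, $y=1$ and $n=a^2+1$ with $a=\lfloor x\rfloor$. The point $(a,a)$ is counted, so the left side is at least $1$, while the right-hand side is $O(x^{-1+\varepsilon})$.

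However, your closing claim that property P disposes of $n>x$ in the applications is wrong. Property P only gives $d\le M(x)\asymp x^t$, and the proof of Theorem 1 applies the lemma to every $d\le M(x)$. So for $t\ge 2$ the range $x<d\le M(x)$ needs a separate argument, which neither you nor the paper supplies.
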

	
	\begin{proof} Let $q=\lfloor (x-y)/n \rfloor$ (integer part). Let $D=[y+1,x]^k$ and
		$E=[y+1,y+nq]^k\subseteq D$. Here $E$ is the union of a number of $q^k$ of hypercubes, each of 
		side length $n$. That is,
		\begin{equation*}
			E= \bigcup_{1\le j_1,\ldots,j_k\le q} E_{j_1,\ldots,j_k},
		\end{equation*}
		where
		\begin{equation*}
			E_{j_1,\ldots,j_k}= [y+(j_1-1)n+1,y+j_1n]\times \cdots \times [y+(j_k-1)n+1,y+j_kn].
		\end{equation*}
		
		In each hypercube $E_{j_1,\ldots,j_k}$ the number of solutions is $N_F(n)$. On the other hand,
		$D=[y+1,x]^k$ can be covered by the union of a number of $(q+1)^k$ of hypercubes of 
		side length $n$, namely
		\begin{equation*}
			D\subset F = \bigcup_{1\le j_1,\ldots,j_k\le q+1} E_{j_1,\ldots,j_k}.
		\end{equation*}
		
		Also, $F\setminus E$ is the union of a number of $(q+1)^k-q^k\ll q^{k-1}$ such hypercubes. Hence,
		the number of solutions in $D$ is 
		\begin{equation*}
			N_F(x,y,n)= q^k N_F(n) + O(q^{k-1}N_F^*(n))
		\end{equation*}
		\begin{equation*}
			= \left\lfloor \frac{x-y}{n}\right\rfloor^k N_F(n)+ O\left(\left\lfloor \frac{x-y}{n}
			\right\rfloor^{k-1}N_F^*(n)\right),
		\end{equation*}
		which gives \eqref{N_F_x_y_d}.
	\end{proof}
	
	We also need the following results.
	
	\begin{lemma}{\rm (Bodin and D\`{e}bes \cite[Cor.\ 6.1]{BodDeb2023})} \label{Lemma_ineq}
		If $k,m\ge 2$ and $f_i(x_1,\ldots,x_k)$ \textup{($1\le i\le m$)} are relatively prime polynomials in $k$ variables, each of degree at most $d$, then for all sufficiently large primes $p$,
		\begin{equation*} 
			N_F(p)\le C p^{k-2},
		\end{equation*}
		with $C=d^m$.
	\end{lemma}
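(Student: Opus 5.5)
The proof will reduce to the case of two coprime polynomials, then count points mod $p$ by projecting away one variable with a resultant and applying the one-variable (Schwartz--Zippel) bound to the base.

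\emph{Step 1: two coprime polynomials.} Since $f_1,\ldots,f_m$ are relatively prime over $\Q$, they are also relatively prime over $\overline{\Q}$, because the gcd is invariant under field extension. Let $q_1,\ldots,q_s$ be the distinct irreducible factors of $f_1$ over $\overline{\Q}$. For each $q_j$ some $f_i$ with $i\ge 2$ is not divisible by $q_j$. Hence the set of $c=(c_2,\ldots,c_m)$ with $q_j\mid \sum_{i\ge 2} c_i f_i$ is a proper linear subspace of the coefficient space. Choose $c\in\Z^{m-1}$ outside these finitely many subspaces and put $g=\sum_{i\ge2}c_if_i$. Then $f_1$ and $g$ are coprime, both have degree at most $d$, and every common zero of $F$ modulo $p$ is a common zero of $f_1$ and $g$ modulo $p$. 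So it suffices to bound the number of common zeros of $f_1,g$ in $\mathbb{F}_p^k$.

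\emph{Step 2: a resultant in a good direction.} Apply a linear change of variables $x\mapsto Ax$ with $A\in \mathrm{GL}_k(\Z)$ so that the top-degree homogeneous parts of $f_1$ and $g$ both have nonzero $x_k^{\deg}$-coefficient. This is possible because it only requires avoiding the zeros of two nonzero forms. Such a change preserves point counts modulo every $p\nmid\det A$. Let $R(x_1,\ldots,x_{k-1})=\mathrm{Res}_{x_k}(f_1,g)$. Since $f_1,g$ are coprime and have constant leading coefficients in $x_k$, $R$ is a nonzero polynomial of total degree at most $\deg f_1\cdot\deg g\le d^2$. For all sufficiently large $p$, the leading coefficients in $x_k$ and at least one coefficient of $R$ are nonzero modulo $p$. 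This excludes only finitely many primes, and it makes the reductions behave well: $\overline R$ is the resultant of $\overline f_1,\overline g$ and is not identically zero.

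\emph{Step 3: counting.} If $(a_1,\ldots,a_k)$ is a common zero modulo $p$, then $\overline R(a_1,\ldots,a_{k-1})=0$. By the Schwartz--Zippel bound, $\overline R$ has at most $\deg R\cdot p^{k-2}\le d^2p^{k-2}$ zeros in $\mathbb{F}_p^{k-1}$. Over each such zero, $\overline f_1(a_1,\ldots,a_{k-1},x_k)$ is a nonzero polynomial in $x_k$ of degree at most $d$, so it has at most $d$ roots. This already gives $N_F(p)\le d^3p^{k-2}$, which suffices for Theorem~\ref{Th_1} with some constant $C$.

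\emph{Main obstacle: sharpening to $C=d^m$.} Since $d^2\le d^m$, it is enough to remove the extra fibre factor $d$. The crude argument double counts: $d^2$ is already the B\'ezout number of the intersection. The intended approach is to show that the total count of pairs (base point, root) is at most $\deg f_1\cdot \deg g\, p^{k-2}$. Concretely, I would note that at a point $a'$ where $\overline{f}_1(a',x_k)$ and $\overline{g}(a',x_k)$ have $r$ common roots, $\overline{R}$ vanishes to order at least $r$ at $a'$. Restricting to a generic line through $a'$ then converts the Schwartz--Zippel bound into a bound counting zeros with multiplicity. If that bookkeeping fails, I would instead invoke the standard Lang--Weil-type elementary bound $\#V(\mathbb{F}_p)\le \deg V\cdot p^{\dim V}$, together with B\'ezout, $\deg V(f_1,g)\le d^2$, applied to the at most $(k-2)$-dimensional variety $V(\overline f_1,\overline g)$.
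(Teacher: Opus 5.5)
Your argument is sound, and it takes a genuinely different route from the paper. The paper gives no proof of this lemma; it simply imports it from Bodin--D\`ebes.

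Your Steps 1--3 are correct as written and give $N_F(p)\le d^3p^{k-2}$. This already proves the lemma as stated whenever $m\ge 3$, since $d^3\le d^m$, so the sharpening is only needed for $m=2$. There your fallback (b) is legitimate. Take $\deg$ to mean the sum of the degrees of all irreducible components. Then the bound $\#V(\mathbb{F}_p)\le \deg V\cdot p^{\dim V}$ (Cafure--Matera, Lachaud--Rolland), combined with Heintz's B\'ezout inequality, gives $d^2p^{k-2}$. Your $\overline R\neq 0$ supplies $\dim V(\overline f_1,\overline g)\le k-2$ over $\overline{\mathbb{F}}_p$, because this set projects into $V(\overline R)$ with finite fibres.

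Route (a), as phrased, needs two repairs.

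\emph{The local claim.} The claim $\mathrm{ord}_{a'}\overline R\ge r$ is true but needs proof. Hensel-factor the $x_k$-monic $\overline f_1$ over $\mathbb{F}_p[[x'-a']]$ according to the roots of $\overline f_1(a',x_k)$. By multiplicativity of the resultant, each common root then contributes a factor lying in the maximal ideal.

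\emph{Summing over $a'$.} More seriously, a generic line chosen separately through each $a'$ gives only pointwise information, which cannot be summed over $a'$. One fix is to quote the multiplicity Schwartz--Zippel lemma of Dvir--Kopparty--Saraf--Sudan. The other is to fix one direction $v\in\mathbb{F}_p^{k-1}$ with $\overline R_{\mathrm{top}}(v)\neq 0$, which exists once $p>d^2$. Then on each of the $p^{k-2}$ lines parallel to $v$, the restriction $t\mapsto\overline R(a'+tv)$ is a nonzero polynomial of exact degree $\deg\overline R$. Its orders of vanishing are at least those of $\overline R$, so summing gives $\sum_{a'}r(a')\le \deg\overline R\cdot p^{k-2}\le d^2p^{k-2}$.

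Compared with the citation, your approach has three advantages. It is elementary and essentially self-contained. It makes the survival of coprimality modulo $p$ explicit through $\overline R\neq 0$. Thanks to the generic linear combination in Step 1, it yields the constant $d^2$, which is independent of $m$ and therefore at least as strong as $d^m$. The paper's citation buys brevity and defers all of the algebraic geometry to the source.
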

	
	\begin{lemma} \label{Lemma_omega} Let $z>0$ be a real number and $k\ge 2$ be an integer. Then
		\begin{equation*}
			\sum_{n\le x} \frac{z^{\omega(n)}}{n} = O\left( (\log x)^z\right),
		\end{equation*}
		\begin{equation*}
			\sum_{n>x} \frac{z^{\omega(n)}}{n^k} = O\left(\frac{(\log x)^{z-1}}{x^{k-1}} \right).
		\end{equation*}
	\end{lemma}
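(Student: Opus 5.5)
The first estimate follows from the Dirichlet series $\sum_n z^{\omega(n)} n^{-s} = \zeta(s)^z G_z(s)$ with $G_z$ analytic near $s=1$. I would instead argue elementarily, by a Rankin-type or Euler-product comparison. Since $z^{\omega(n)}$ is multiplicative and nonnegative,
\[
\sum_{n\le x}\frac{z^{\omega(n)}}{n}\le \prod_{p\le x}\Bigl(1+\frac{z}{p}+\frac{z}{p^2}+\cdots\Bigr)=\prod_{p\le x}\Bigl(1+\frac{z}{p-1}\Bigr).
\]
Every $n\le x$ has all prime factors $\le x$, so expanding the product reproduces each term $z^{\omega(n)}/n$ with $n\le x$, together with further nonnegative terms. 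Taking logarithms and using $\log(1+t)\le t$ gives
\[
\log\prod_{p\le x}\Bigl(1+\frac{z}{p-1}\Bigr)\le z\sum_{p\le x}\frac1p + z\sum_p\Bigl(\frac{1}{p-1}-\frac1p\Bigr) = z\log\log x + O_z(1)
\]
by Mertens' theorem. Exponentiating yields $O((\log x)^z)$. This proves the first bound for every $z>0$, with an implied constant depending on $z$.

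For the tail sum I would use partial summation from the first estimate, and the difficulty is the exponent. Put $A(t)=\sum_{n\le t} z^{\omega(n)}/n \ll (\log t)^z$. Then
\[
\sum_{n>x}\frac{z^{\omega(n)}}{n^k}=\sum_{n>x}\frac{z^{\omega(n)}}{n}\cdot\frac{1}{n^{k-1}}=-\frac{A(x)}{x^{k-1}}+(k-1)\int_x^\infty \frac{A(t)}{t^{k}}\,dt,
\]
where the boundary term at infinity vanishes because $A(t)/t^{k-1}\to 0$ for $k\ge 2$. Bounding $|A(t)|\ll(\log t)^z$ and $\int_x^\infty (\log t)^z t^{-k}\,dt \ll (\log x)^z x^{1-k}$ only gives $(\log x)^z/x^{k-1}$. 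That is one logarithm too many.

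To gain the missing logarithm I need the sharper fact $\sum_{n\le t} z^{\omega(n)}\ll t(\log t)^{z-1}$, which is the Selberg–Delange / Hall–Tenenbaum type bound. I would prove it elementarily by the standard trick of writing $\log n=\sum_{d\mid n}\Lambda(d)$:
\[
\sum_{n\le t} z^{\omega(n)}\log n=\sum_{m d\le t} z^{\omega(md)}\Lambda(d)\le \sum_{m\le t} z^{\omega(m)}\sum_{p^a\le t/m}\max(1,z)\log p .
\]
Here $\omega(mp^a)\le\omega(m)+1$, which costs at most a factor $\max(1,z)$. Chebyshev's bound gives $\sum_{p^a\le y}\log p\ll y$, so the right-hand side is $\ll t\sum_{m\le t} z^{\omega(m)}/m\ll t(\log t)^z$ by the first part. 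This gives $\sum_{n\le t} z^{\omega(n)}\log n\ll t(\log t)^z$. The terms with $n\le \sqrt t$ contribute at most $\sqrt t\,\max(1,z)^{O(\log t/\log\log t)}\log t = o(t)$, while each term with $n>\sqrt t$ has $\log n\ge \frac12\log t$. Dividing by $\log t$ therefore yields $B(t):=\sum_{n\le t}z^{\omega(n)}\ll t(\log t)^{z-1}$ for $t\ge 2$.

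Then partial summation with $B$ finishes the argument:
\[
\sum_{n>x}\frac{z^{\omega(n)}}{n^k}\le k\int_x^\infty \frac{B(t)}{t^{k+1}}\,dt\ll \int_x^\infty \frac{(\log t)^{z-1}}{t^{k}}\,dt\ll \frac{(\log x)^{z-1}}{x^{k-1}}.
\]
The last step uses $k\ge 2$. For $z<1$ the function $(\log t)^{z-1}$ is decreasing, so the bound is immediate. For $z\ge1$ the integral is handled by one integration by parts, giving $(\log x)^{z-1}x^{1-k}/(k-1)$ plus a smaller term. The main obstacle is this saving of one logarithm via the $\log n=\sum\Lambda$ identity. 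Everything else is routine Mertens and partial summation.
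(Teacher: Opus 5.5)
Your proof is correct, but it takes a different route from the paper.

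\textbf{The paper's route.} It quotes the Selberg--Delange asymptotic $\sum_{n\le x} z^{\omega(n)} = f(z)\,x(\log x)^{z-1}+O\bigl(x(\log x)^{z-2}\bigr)$, citing Selberg and Tenenbaum. Both estimates then follow from it by partial summation.

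\textbf{Your route.} You prove only the upper bounds that are actually needed, and you do so elementarily.
\begin{itemize}
\item The first estimate comes directly from the Euler product $\prod_{p\le x}\bigl(1+z/(p-1)\bigr)$ together with Mertens' theorem.
\item The crucial bound $\sum_{n\le t} z^{\omega(n)}\ll t(\log t)^{z-1}$ comes from the identity $\log n=\sum_{d\mid n}\Lambda(d)$, Chebyshev's bound $\psi(y)\ll y$, and your first estimate. This is the classical argument behind the inequality $\sum_{n\le x}f(n)\ll \frac{x}{\log x}\sum_{n\le x}f(n)/n$ for suitable nonnegative multiplicative $f$.
\end{itemize}

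\textbf{Comparison.} The paper's route is shorter and yields a genuine asymptotic, even for complex $z$. That is more than Theorems~1 and~3 require, since only upper bounds are used there. Your route is self-contained and needs nothing beyond Mertens and Chebyshev. It therefore keeps the whole paper elementary, in line with its stated aim of ``using only elementary arguments''.

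\textbf{A small wording issue.} For the terms with $n\le\sqrt t$ you conclude that their contribution is $o(t)$. That would not suffice when $z<1$, because the target $t(\log t)^{z-1}$ is then itself $o(t)$. What you actually need is $t^{1/2+o(1)}=O\bigl(t(\log t)^{z-1}\bigr)$, and your explicit bound $\sqrt t\,\max(1,z)^{O(\log t/\log\log t)}$ does give this. A simpler way to get it is
\[
\sum_{n\le\sqrt t}z^{\omega(n)}\le \sqrt t\sum_{n\le\sqrt t}\frac{z^{\omega(n)}}{n}\ll \sqrt t\,(\log t)^z,
\]
using your first estimate.
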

	
	\begin{proof} It is known that 
		\begin{equation*}
			\sum_{n\le x} z^{\omega(n)} = f(z) x (\log x)^{z-1} + O\left(x (\log x)^{z-2} \right),
		\end{equation*}
		with some constant $f(z)$, even in a more precise form, for complex values of $z$.
		See, e.g., Selberg \cite{Sel1954}, Tenenbaum \cite[p.\ 300, Ch.\ II. 6, Th.\ 6.1]{Ten2015}.
		Now, the given estimates follow by partial summation. 
	\end{proof}
	
	\section{Proofs} \label{Section_Proofs}
	
	\begin{proof}[Proof of Theorem {\rm \ref{Th_1}}]
		By the positivity property P, for the given polynomials $f_i(x_1,\ldots,x_k)$ there exist $y_i\in \N$ such that $f_i(x_1,\ldots,x_k)\ge 1$ if $x_1,\ldots,x_k\ge y_i$ ($1\le i\le m$). 
		Let $y_0=\max \{y_i: 1\le i\le m\}\in \N$. 
		
		We have for $x>y_0+1$,
		\begin{equation*}
			S_{F,h}(x)= \sum_{y_0< x_1,\ldots,x_k\le x} h({\gcd}_F(x_1,\ldots,x_k)) + 
			\sum_{\substack{1\le x_1,\ldots,x_k\le x\\ \text{ at least one $x_i\le y_0$ }}} h({\gcd}_F(x_1,\ldots,x_k)).
		\end{equation*}
		
		Here the second sum is $O(x^{k-1})$, since the function $h$ is bounded.
		
		Let $S'_{F,h}(x)$ denote the first sum, and let 
		\begin{equation*}
			M(x) =\max \{f_i(x_1,\ldots,x_k): y_0+1\le x_1,\ldots,x_k\le x, 1\le i\le m \},
		\end{equation*}
		where $M(x)\ge 1$, $M(x)$ is non decreasing, and $M(x)\to \infty$ as $x\to \infty$. Also, $Ax^t\le M(x)\le B x^t$ with $t=\max_{1\le i\le m} d_i\ge 1$ and $A,B>0$ certain constants (depending on the polynomials). 
		
		Using that $h(n)=\sum_{d\mid n} (\mu*h)(d)$ ($n\in \N$), the sum $S'_{F,h}(x)$ can be written as
		\begin{equation*}
			S'_{F,h}(x)= \sum_{y_0+1\le x_1,\ldots,x_k\le x} \ \sum_{1\le d\mid \gcd(f_1(x_1,\ldots,x_k),\ldots, f_m(x_1,\ldots,x_k))} (\mu*h)(d) 
		\end{equation*}
		\begin{equation*}
			= \sum_{y_0+1\le x_1,\ldots,x_k\le x} \ \sum_{\substack{1\le d\mid f_i(x_1,\ldots,x_k)\\ 1\le i\le m}} (\mu*h)(d)
		\end{equation*}
		\begin{equation*}
			= \sum_{1\le d\le M(x)} (\mu*h)(d) \sum_{\substack{y_0+1\le x_1,\ldots,x_k\le x \\ f_i(x_1,\ldots,x_k)\equiv 0 \text{ (mod $d$)}, \, 1\le i\le m}} 1. 
		\end{equation*}
		
		(Here $y_0$ is needed to avoid the zero values of the polynomials, in this way we have $1\le d\le M(x)$.
		Otherwise, if $d\mid 0$, then the values of $d$ are not bounded.)
		
		The inner sum of above is $N_F(x,y_0,d)$, and we deduce by Lemma \ref{Lemma_cong} that
		\begin{equation*}
			S'_{F,h}(x) = \sum_{1\le d\le M(x)} (\mu*h)(d) \left( (\frac{x}{d})^k N_F(d) + O\left( (\frac{x}{d})^{k-1} N_F^*(d)\right)\right)
		\end{equation*}
		\begin{equation} \label{sum_errors}
			= x^k \sum_{d=1}^{\infty} \frac{(\mu*h)(d)N_F(d)}{d^k} + R^{(1)}_F(x) + R^{(2)}_F(x),
		\end{equation}
		where
		\begin{equation*}
			R^{(1)}_F(x) \ll x^k \sum_{d>M(x)} \frac{|(\mu*h)(d)| N_F^*(d)}{d^k},
		\end{equation*}
		\begin{equation*}
			R^{(2)}_F(x) \ll x^{k-1} \sum_{d\le M(x)} \frac{|(\mu*h)(d)| N_F^*(d)}{d^{k-1}}.
		\end{equation*}
		
		In \eqref{sum_errors} the series in the main term is absolutely convergent. To show this, note that the 
		involved functions are multiplicative, hence
		the series can be expanded into the Euler product
		\begin{equation*} 
			\sum_{d=1}^{\infty} \frac{(\mu*h)(d)N_F(d)}{d^k} = \prod_p \left(1+ 
			\frac{(h(p)-1)N_F(p)}{p^k}\right).
		\end{equation*}
		
		Using the assumptions, for all but finitely many primes $p$, say $p\ge p_0$, we have 
		$N_F(p)\le Cp^{k-2}$ and $|h(p)|\le D$. Hence the above infinite product is absolutely convergent, since  
		\begin{equation*} 
			\prod_p \left(1+ \frac{|h(p)-1|N_F(p)}{p^k} \right) \ll \prod_{p\ge p_0} \left(1+ \frac{C(D+1)}{p^2} \right)<\infty.
		\end{equation*}
		
		Now we estimate the error terms. It follows from $N_F(p)\le Cp^{k-2}$ ($p\ge p_0$), by 
		multiplicativity, that for all squarefree $n\in \N$,
		\begin{equation*}
			N_F(n)=\prod_{p\mid n} N_F(p) \ll  C^{\omega(n)} n^{k-2}. 
		\end{equation*}
		
		Using that $|h(p)|\le D$ ($p\ge p_0$) and $(\mu*h)(p^a)=0$ ($a\ge 2$) we deduce that for every $n\in \N$,
		\begin{equation*}
			|(\mu*h)(n)| N_F(n) \le \prod_{p\mid n} |h(p)-1| N_F(p) \ll (C(D+1))^{\omega(n)} n^{k-2}.
		\end{equation*}
		
		Hence we have by Lemma \ref{Lemma_omega},
		\begin{equation*}
			R^{(1)}_F(x)\ll x^k \sum_{d>M(x)} \frac{(C(D+1))^{\omega(d)}}{d^2} 
			\ll x^k \frac{(\log M(x))^{C(D+1)-1}}{M(x)}
		\end{equation*}
		\begin{equation*}
			\le x^k \frac{(\log Bx^t)^{C(D+1)-1}}{Ax^t}\ll x^{k-t}  (\log x)^{C(D+1)-1}, 
		\end{equation*}
		where $t\ge 1$. Also,
		\begin{equation*}
			R^{(2)}_F(x) \ll x^{k-1} \sum_{d\le M(x)} \frac{(C(D+1))^{\omega(d)}}{d} 
			\ll x^{k-1} (\log M(x))^{C(D+1)}
		\end{equation*}
		\begin{equation*}
			\le x^{k-1} (\log B x^t)^{C(D+1)} \ll x^{k-1} (\log x)^{C(D+1)},
		\end{equation*}
		finishing the proof of \eqref{est_h}.
		
		Furthermore, Lemma \ref{Lemma_ineq} ensures that one can choose $C=d^m$.
	\end{proof}
	
	\begin{proof}[Proof of Corollary {\rm \ref{Corollary_dens}}] This is the special case $h(1)=1$, $h(p)=0$ for every prime $p$. Hence, one can select $D=0$. 
	\end{proof}
	
	\begin{proof}[Proof of Corollary {\rm \ref{Corollary_spec_h}}]
		The polynomials $f_1(x_1,\ldots,x_k)=x_1\cdots x_k$ and $f_2(x_1,\ldots,x_k)=x_1+\cdots +x_k$
		have positive coefficients, and are of degrees $d_1=k$, $d_2=1$. 
		For primes $p$ the number of solutions of the 
		system of congruences $x_1\cdots x_k\equiv 0$ (mod $p$), $x_1+\cdots +x_k\equiv 0$ (mod $p$) is 
		\begin{equation*}
			N_k(p)= p^{k-1}- \frac{p-1}{p}\left((p-1)^{k-1}+ (-1)^k \right),
		\end{equation*}
		which is a polynomial of degree $k-2$. See \cite[Lemma 5.6]{CsiTot2026}. Here 
		$N_2(p)= 1$, $N_3(p)= 3p-2$, $N_4(p)= 4p^2-6p+3$,  
		$N_5(p)= 5p^3-10p^2+10p-4$.
		
		We show that $N_k(p)<k p^{k-2}$ for every $p$ and every $k\ge 2$.
		This is true for $k=2$, and assume that $k\ge 3$. Then by Lagrange's mean value theorem,
		\begin{equation*}
			p^{k-1}-(p-1)^{k-1} < (k-1)p^{k-2},
		\end{equation*}
		and we deduce that 
		\begin{equation*} 
			N_k(p) < p^{k-1} + \frac{p-1}{p} \left((k-1)p^{k-2}- p^{k-1} + (-1)^{k-1} \right) 
		\end{equation*}
		\begin{equation*} 
			= kp^{k-2} - (k-1) p^{k-3} + (-1)^{k-1} \frac{p-1}{p} < k p^{k-2},
		\end{equation*}
		as we claimed. Hence we have $C=k$ and apply Theorem \ref{Th_1}.
	\end{proof}
	
	\begin{proof}[Proof of Corollary {\rm \ref{Corollary_spec_delta}}] Apply Corollary \ref{Corollary_spec_h} for $h=\delta$ with $D=0$.
	\end{proof}
	
	\begin{proof}[Proof of Theorem {\rm \ref{Th_linear}}]
		Consider the system of linear congruences 
		\begin{align*}
			a_{11}x_1+\cdots +a_{1k}x_k+b_1 \equiv & \ 0 \textup{ (mod $n$)} \\ \ldots & \ldots \\
			a_{k1}x_1+\cdots +a_{kk}x_k+b_k \equiv & \ 0 \textup{ (mod $n$)},
		\end{align*}
		where $a_{ij},b_i\in \Z$ \textup{($1 \le i,j\le k$)} and $n\in \N$. Let $M=(a_{ij})_{1\le i,j\le k}$ and assume that $\gcd(\det(M),n)=1$. Then the system has a unique solution (mod $n$). See, e.g., Rosen \cite[Sect.\ 4.5]{Ros2011}.
		Hence, if $\det(A)=\pm 1$, then $N_F(n)=1$ for every $n\in \N$. 
		
		If the function $h$ is bounded, then $|(\mu*h)(n)|\ll \sum_{d\mid n} |\mu(d)|= 2^{\omega(n)}$.
		
		Following the notation and arguments in the proof of Theorem \ref{Th_1}, we have $t=1$,
		$Ax\le M(x)\le Bx$ with some constants $A,B>0$. According to \eqref{sum_errors},
		\begin{equation} \label{sum_special}
			S'_{F,h}(x) 
			= x^k \sum_{d=1}^{\infty} \frac{(\mu*h)(d)N_F(d)}{d^k} + R^{(1)}(x) + R^{(2)}(x),
		\end{equation}
		where the series is absolutely convergent, and its sum is
		\begin{equation*}
			\sum_{d=1}^{\infty} \frac{(\mu*h)(d)}{d^k}= \frac1{\zeta(k)} \sum_{d=1}^{\infty} \frac{h(d)}{d^k}.
		\end{equation*}
		
		For the error terms in \eqref{sum_special} we have by Lemma \ref{Lemma_omega},
		\begin{equation*}
			R^{(1)}(x) \ll x^k \sum_{d>M(x)} \frac{|(\mu*h)(d)|}{d^k} \ll  
			x^k \sum_{d>M(x)} \frac{2^{\omega(d)}}{d^k} \ll x^k \frac{\log M(x)}{M(x)^{k-1}}\le
			x^k \frac{\log (Bx)}{(Ax)^{k-1}}\ll x \log x, 
		\end{equation*}
		\begin{equation*}
			R^{(2)}(x) \ll x^{k-1} \sum_{d\le M(x)} \frac{|(\mu*h)(d)|}{d^{k-1}}
			\ll x^{k-1} \sum_{d\le M(x)} \frac{2^{\omega(d)}}{d^{k-1}},
		\end{equation*}
		which is $\ll x^{k-1}$ for $k\ge 3$, and for $k=2$ it is $\ll x(\log M(x))^2\le x(\log Bx)^2\le x(\log x)^2$. 
	\end{proof}
	
	\begin{proof}[Proof of Corollary {\rm \ref{Corollary_linear}}]
		If $h=\delta$, then $\mu*h=\mu$. Hence $|(\mu*h)|\le 1$. It follows from the proof of Theorem \ref{Th_linear} that for $k=2$ the error is $x\log x$.
	\end{proof}
	
	{\bf Acknowledgement.} The author thanks Arnaud Bodin for helpful remarks and for pointing out Lemma \ref{Lemma_ineq}.

	\vskip2mm
	
	\noindent L\'aszl\'o T\'oth \\
	Department of Mathematics \\
	University of P\'ecs \\
	Ifj\'us\'ag \'utja 6, 7624 P\'ecs \\
	Hungary \\
	{\tt{ltoth@gamma.ttk.pte.hu}}
	
\end{document}